\newtheorem{thm}{Theorem}[section]
\newtheorem{cor}[thm]{Corollary}
\newtheorem{defin}[thm]{Definition}
\newcommand{\cof}{\rightarrowtail }
\newcommand{\we}{\stackrel{\sim}{\rightarrow}}
\def\Q{{\mathbb{Q}}}
\def\Z{{\mathbb{Z}}}
\def\cat{{\rm{cat}\hskip1pt}}
\def\Mcat{{\rm{Mcat}\hskip1pt}}
\def\secat{{\rm{secat}\hskip1pt}}
\def\Msecat{{\rm{Msecat}\hskip1pt}}
\def\Hsecat{{\rm{Hsecat}\hskip1pt}}
\def\nil{{\rm{nil}\hskip1pt}}
\def\TC{{\rm{TC}\hskip1pt}}
\def\MTC{{\rm{MTC}\hskip1pt}}
\def\HTC{{\rm{HTC}\hskip1pt}}
\def\mtc{{\mathbf{mtc}\hskip1pt}}
\def\htc{{\mathbf{htc}\hskip1pt}}
\def\Hom{{\rm{Hom}}}
\begin{document}

\title{Rational approximations of sectional category and Poincar\'e duality}

\author[J.G. Carrasquel-Vera]{Jos\'e Gabriel Carrasquel-Vera}

\address{\begin{sloppypar}J.G. Carrasquel-Vera, Institut de Recherche en Mathématique et Physique, Université Catholique de Louvain, 2 Chemin du Cyclotron, 1348 Louvain-la-Neuve, Belgium\end{sloppypar}}

\email{jose.carrasquel@uclouvain.be}

\author[T. Kahl]{Thomas Kahl}

\address{T. Kahl, Centro de Matem\'atica, Universidade do Minho, Campus de Gualtar, 4710-057 Braga, Portugal}

\email{kahl@math.uminho.pt}

\author[L. Vandembroucq]{Lucile Vandembroucq}

\address{L. Vandembroucq, Centro de Matem\'atica,Universidade do Minho, Campus de Gualtar, 4710-057 Braga, Portugal}

\email{lucile@math.uminho.pt}

\thanks{The research of the second and third authors was supported by FCT -  \emph{Fundação para a Ciência e a Tecnologia} through projects PTDC/MAT/0938317/2008 and  PEstOE/MAT/UI0013/2014.}

\subjclass[2010]{55M30, 55P62}

\keywords{Lusternik-Schnirelmann category, sectional category, topological complexity, Sullivan models, Poincaré duality}

%\date{Draft version - \today}

\begin{abstract}
Félix, Halperin, and Lemaire have shown that the rational module category $\Mcat$ and the rational Toomer invariant $e_0$ coincide for simply connected Poincaré duality complexes. We establish an analogue of this result for the sectional category of a fibration.
\end{abstract}

\maketitle

\section{Introduction}

The sectional category (or genus) of a fibration $p\colon  E\to X$, $\secat(p)$, is the least integer $n$ such that $X$ can be covered by $n+1$ open sets on each of which $p$ admits a continuous local section. This invariant, which has been introduced by Schwarz in \cite{schwarz}, is a generalization of the Lusternik-Schnirelmann category of a space: if $X$ is a path-connected pointed space, then the L.-S. category of $X$, $\cat(X)$, is precisely the sectional category of the path fibration $ev_1\colon  PX\to X$, $\lambda\to \lambda(1)$ where $PX$ is the space of paths beginning in the base point. Another important special case of sectional category is the topological complexity of a space, introduced by Farber \cite{Farber1} in order to give a topological measure of the complexity of motion planning problems in robotics. The topological complexity of a space $X$, $\TC(X)$, is the sectional category of the fibration $ev_{0,1}\colon X^{[0,1]}\to X\times X$, $\lambda \to (\lambda(0),\lambda(1))$.

L.-S. category has been extensively studied in the framework of rational homotopy theory (see \cite[Part V]{RHT}, \cite[Chapter 5]{CLOT}). It has been established, in particular, that the rational L.-S. category of a simply connected Poincar\'e duality complex
$X$, $\cat_0(X)$, coincides with the rational Toomer invariant of $X$, $e_0(X)$. This result actually follows from two independent results: first, Hess' theorem \cite{Hess} that, for any simply connected CW complex of finite type, the rational category coincides with the invariant $\Mcat$, and second, the theorem by F\'elix, Halperin, and Lemaire \cite{FHL} that, for any simply connected Poincar\'e duality complex $X$, one has $\Mcat(X)=e_0(X)$. The goal of this paper is to extend the result of F\'elix, Halperin, and Lemaire to sectional category. More precisely, considering the generalizations, respectively denoted by $\Msecat$ and $\Hsecat$, of the invariants $\Mcat$  and $e_0$ to fibrations (see Section \ref{Msecat}),  we establish in Section \ref{dem} the following theorem:

\begin{thm}\label{Poincare} Let $p\colon E\to X$ a fibration such that $H^*(X;\Q)$ is a Poincar\'e duality algebra. Then $\Msecat(p)=\Hsecat(p).$
\end{thm}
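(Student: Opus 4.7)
The strategy is to adapt the argument of F\'elix--Halperin--Lemaire \cite{FHL} for $\Mcat(X)=e_0(X)$ to the relative setting of a fibration. The inequality $\Msecat(p)\ge \Hsecat(p)$ should hold in complete generality (by analogy with $\Mcat\ge e_0$), as a direct consequence of the definitions: any module-theoretic retraction witnessing $\Msecat(p)\le n$ induces a cohomological splitting of the relevant quotient map, and so bounds $\Hsecat(p)$. The Poincar\'e duality hypothesis is needed only for the reverse inequality $\Msecat(p)\le \Hsecat(p)$, and that is where I would put the work.

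To prove the reverse inequality, I would first fix a relative Sullivan model $\varphi\colon (\Lambda V,d)\to(\Lambda V\otimes\Lambda W,D)$ of $p$ and rewrite both invariants in terms of this model. Concretely, $\Msecat(p)\le n$ should be equivalent to the existence of a homotopy retraction, in the category of $(\Lambda V,d)$-modules, of a canonical projection $\pi_n$ coming from a relative Ganea-type construction; and $\Hsecat(p)\le n$ should correspond to the vanishing of a Toomer-type obstruction, i.e.\ to the fact that $\pi_n$ admits a section at the level of $\Q$-cohomology. The plan is then to upgrade such a cohomological section into a genuine $(\Lambda V,d)$-module retraction by exploiting Poincar\'e duality on $H^*(X;\Q)$.

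The key tool is the pairing with the fundamental class $\omega\in H^*(X;\Q)$, which renders $H^*(X;\Q)$ self-dual as a graded module over itself. Following the F\'elix--Halperin--Lemaire scheme, the obstruction to lifting the cohomological splitting of $\pi_n$ to a module retraction lies in a certain $\operatorname{Ext}$-group over the base model. I would show that, when the base algebra is Poincar\'e duality, this obstruction group is transported by $\omega$ onto a dual group, and that this dual is governed precisely by the vanishing condition that defines $\Hsecat(p)\le n$. Technically this should involve writing a suitable semifree resolution of the relevant $(\Lambda V,d)$-module, and checking that duality with respect to $\omega$ transposes $\pi_n$ onto its ``co-projection'', identifying the obstructions on the two sides.

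The main obstacle, I expect, is that Poincar\'e duality is assumed only on $H^*(X;\Q)$, not on $H^*(E;\Q)$ nor on the total extension $(\Lambda V\otimes\Lambda W,D)$; hence the self-duality has to be pulled along $\varphi$ and made compatible with the differential $D$ on the fibre coordinates $W$. Setting up this relative duality pairing on $\operatorname{Ext}$-groups in a way that is natural with respect to the quotient maps $\pi_n$ is the heart of the proof. Once this identification is in place, the conclusion should follow formally as in \cite{FHL}, with the category of modules over $(\Lambda V,d)$ playing the role that the category of modules over the minimal model of $X$ plays in the absolute case.
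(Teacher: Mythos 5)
Your overall strategy is correct --- the inequality $\Msecat(p)\geq\Hsecat(p)$ is immediate, and Poincar\'e duality is only needed for the converse --- but the route you propose is genuinely different from the paper's, and in its present form it has a real gap. The paper does not use a relative Sullivan model of $p$, a Ganea-type construction, or any obstruction theory in $\operatorname{Ext}$-groups. Instead it reduces the theorem to a general algebraic lemma: if $f\colon(A,d)\to(B,d)$ is a morphism of commutative cochain algebras with $H(A,d)$ a Poincar\'e duality algebra and $H(f)$ injective, then $f$ admits a homotopy retraction as a morphism of $(A,d)$-modules; applying this to $A_{PL}(j^n p)$ gives the result. The lemma is proved by the surjective trick: factor $f$ as a quasi-isomorphism $i\colon A\to R$ (with retraction $r$) followed by a surjection $q\colon R\to B=R/K$, and observe that the Poincar\'e duality quasi-isomorphism $\phi\colon R\to\Hom(R,\Q)$ can be \emph{chosen} to kill $K$: injectivity of $H(q)$ keeps the fundamental cocycle $\omega$ outside $d(R^{n-1})+K^n$, so the complement $S$ defining $\omega^{\sharp}$ may be taken to contain $K^n$; then $\omega^{\sharp}(K)=0$, and since $K$ is an ideal, $\phi(K)=0$. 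Hence $\phi$ (up to suspension) factors through $q$, and the lifting lemma in the model category of $(R,d)$-modules produces a retraction of $q$ and therefore of $f$. Note that this entirely sidesteps your worry about pulling duality along $\varphi$ to the fibre coordinates: only duality on the base and the fact that $K$ is an ideal are used.

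The concrete gap in your proposal is the step you yourself call ``the heart of the proof.'' You assert that the obstruction to upgrading a cohomological section to a module retraction lies in a certain $\operatorname{Ext}$-group, that duality with $\omega$ ``transports'' this group onto a dual, and that the dual is ``governed precisely'' by the condition defining $\Hsecat(p)\leq n$; but you never identify the $\operatorname{Ext}$-group, never specify the transport map, and never show that the transported obstruction vanishes. As written this restates what must be proved rather than proving it, and it is not clear that the obstruction-theoretic framework you envision can be set up at all, let alone that it would close as cleanly as the direct factorisation argument above. I would recommend abandoning the Ext-theoretic scheme in favour of the elementary observation that the duality quasi-isomorphism itself can be arranged to factor through the relevant quotient.
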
 
In Section \ref{MTC}, we apply this result to Farber's topological complexity. 

All spaces we consider in this article are compactly generated Hausdorff spaces, and fibrations are understood to be surjective. Throughout this paper we work over the field $\Q$ of rational numbers. All graded vector spaces we consider are $\Z$-graded with upper degree, and all differential vector spaces are cochain complexes, i.e., the differential raises the upper degree by one. Our general reference for notions and results from rational homotopy theory is \cite{RHT}.

\emph{Acknowledgement.} We are indebted to Yves Félix from whom we learned a proof of the equality $\Mcat (X) = e_0(X)$ for Poincaré duality complexes that we were able to generalize in order to prove Theorem \ref{Poincare}.

\section{Sectional category, $\Msecat$, and $\Hsecat$}

\subsection{Fundamental results on sectional category}

We recall here two fundamental results on sectional category due to Schwarz \cite{schwarz}.

Let $p\colon E\to X$ be a fibration, and let $p^*\colon H^*(X)\to H^ *(E)$ be the morphism induced by $p$ in  cohomology. Then, considering the nilpotency of the ideal $\ker p^*$, that is, the least integer $n$ such that any $(n+1)$-fold cup product in $\ker p^*$ is trivial, we have

\begin{thm}\label{schwarzthm1} $\nil\ker p^* \leq \secat(p) \leq \cat(X)$.
\end{thm}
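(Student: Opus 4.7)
The plan is to prove the two inequalities separately, both via essentially standard arguments from sectional-category theory.

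For the upper bound $\secat(p)\leq \cat(X)$, I would start from an open cover $U_0,\dots,U_n$ of $X$ by sets that are contractible in $X$, where $n=\cat(X)$. Fixing a base point $x_0\in X$ and a point $e_0\in p^{-1}(x_0)$, each contracting homotopy $H_i\colon U_i\times I\to X$ (with $H_i(\cdot,0)=\mathrm{incl}_{U_i}$ and $H_i(\cdot,1)\equiv x_0$) can be lifted through $p$ starting from the constant map at $e_0$, thanks to the homotopy lifting property of the fibration. Evaluating the lift at time $0$ yields a continuous section of $p$ over $U_i$, proving $\secat(p)\leq n$.

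For the lower bound $\nil\ker p^*\leq \secat(p)$, suppose $\secat(p)=n$ and let $U_0,\dots,U_n$ be an open cover of $X$ together with local sections $s_i\colon U_i\to E$ of $p$. Write $j_i\colon U_i\hookrightarrow X$ for the inclusions. Since $p\circ s_i = j_i$, we have $j_i^* = s_i^*\circ p^*$, so any class $\alpha\in \ker p^*$ satisfies $j_i^*\alpha=0$. By the long exact sequence of the pair $(X,U_i)$, each such $\alpha$ lifts to a class $\tilde\alpha\in H^*(X,U_i)$. Given $\alpha_0,\dots,\alpha_n\in \ker p^*$ with lifts $\tilde\alpha_i\in H^*(X,U_i)$, the relative cup product
\[
\tilde\alpha_0\cup\cdots\cup\tilde\alpha_n \in H^*(X,\,U_0\cup\cdots\cup U_n)=H^*(X,X)=0
\]
restricts to $\alpha_0\cup\cdots\cup\alpha_n$ in $H^*(X)$, so this $(n+1)$-fold product vanishes. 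Hence $\nil\ker p^*\leq n$.

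The only subtle point is handling the relative cup product correctly: one needs the naturality property that the external cup product
\[
H^*(X,U_0)\otimes\cdots\otimes H^*(X,U_n)\longrightarrow H^*(X,\,U_0\cup\cdots\cup U_n)
\]
followed by restriction to $H^*(X)$ recovers the ordinary $(n+1)$-fold cup product of the images. This is standard but is really the key ingredient; everything else is straightforward covering and lifting. No other steps present real difficulty, and the argument does not require any rational-homotopy machinery (which only enters later, when $\Msecat$ and $\Hsecat$ are introduced).
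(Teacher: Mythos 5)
Your proof is correct and is the standard one for Schwarz's theorem; the paper itself does not prove this result, but simply recalls it from Schwarz's paper \cite{schwarz}, so there is no proof in the text to compare against. Both halves of your argument are the classical ones: the upper bound follows by lifting the nullhomotopy of each inclusion $U_i\hookrightarrow X$ through the fibration (here you should technically reverse the homotopy so that the constant lift sits at $t=0$ before invoking the homotopy lifting property, and then evaluate at $t=1$; your phrasing runs time backward, which is a harmless slip), and the lower bound is the usual relative cup-product argument, which is exactly the cup-length bound for L.-S.\ category with the local sections $s_i$ replacing the local nullhomotopies. One small remark: fixing a single basepoint $x_0$ for all the contractions implicitly uses path-connectedness of $X$; without it one simply picks a separate point $x_i\in X$ and $e_i\in p^{-1}(x_i)$ (nonempty since fibrations are assumed surjective in this paper) for each $U_i$. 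These are cosmetic points; the argument is sound.
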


Another fundamental result is the very useful characterization of sectional category in terms of joins. 

\begin{defin}\rm 
The \textit{(fiber) join} of two maps $p \colon  E \to X$ and $p'\colon E'\to X$, denoted by $E\ast _XE'$, is the double mapping cylinder of the projections $E\times _XE' \to E$ and $E\times _XE' \to E'$, i.e., the quotient space $((E\times _XE')\times I \amalg E \amalg E')/\sim$ where $(e,e',0) \sim e$, $(e,e',1) \sim e'$. The \textit{join map} of $p$ and $p'$ is the map $j_{p,p'} \colon  E\ast_XE' \to X$ defined by $j_{p,p'}([e,e',t]) = p(e) = p'(e')$, $j_{p,p'}([e]) = p(e)$, and $j_{p,p'}([e']) = p'(e')$. The \textit{$n$-fold join} and the \textit{$n$th join map} of $p$ are iteratively defined by $\ast^0_XE = E $, $\ast^n_XE = (\ast^{n-1}_XE) \ast_XE$, $j^0p = p$, and $j^np = j_{j^{n-1}p,p}$.
\end{defin}

Notice that here the notation $*^n$ means the join of $n+1$ copies of the
considered object.

\begin{thm}\label{joinsecat}
Let $p\colon E\to X$ be a fibration. If $X$ has the homotopy type of a CW complex, then $\secat p \leq n$ if and only if $j^np$ has a section.
\end{thm}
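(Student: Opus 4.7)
The plan is to read off from the join construction itself a canonical $(n+1)$-element open cover of $\ast^n_X E$ together with continuous partial sections, and conversely to assemble a section of $j^n p$ from local sections of $p$ by means of a partition of unity. The key geometric observation is that iterating the interval parameter of the double mapping cylinder yields continuous ``barycentric coordinate'' maps $t_0,\ldots,t_n\colon \ast^n_X E\to[0,1]$ with $\sum_i t_i=1$. Set $V_i := \{y \in \ast^n_X E : t_i(y) > 0\}$. These sets are open in the quotient topology and cover $\ast^n_X E$, and on $V_i$ the extraction of the $i$th join component defines a continuous projection $\pi_i\colon V_i\to E$ satisfying $p\circ\pi_i = (j^n p)|_{V_i}$.

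Granted this setup, the ``if'' direction is immediate. Given a section $\sigma$ of $j^n p$, the sets $U_i := \sigma^{-1}(V_i)$ form an open cover of $X$ of size $n+1$, and the composite $\pi_i\circ\sigma|_{U_i}\colon U_i\to E$ is a continuous local section of $p$; hence $\secat p\leq n$.

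For the converse, start from an open cover $X = U_0\cup\cdots\cup U_n$ with local sections $s_i\colon U_i\to E$. The CW homotopy-type hypothesis (after replacing $X$ by a CW model if necessary and using that both conditions are homotopy invariants of the fibration) guarantees a partition of unity $\{\phi_i\}$ subordinate to $\{U_i\}$ with $\mathrm{supp}(\phi_i)\subset U_i$. One then defines
\[
\sigma\colon X\to \ast^n_X E, \qquad \sigma(x)=\sum_{i\colon\phi_i(x)>0}\phi_i(x)\bigl[s_i(x)\bigr],
\]
the $i$th summand being dropped whenever $\phi_i(x)=0$. The support condition ensures that every $s_i(x)$ that appears in the sum is defined and lies in $p^{-1}(x)$, so $\sigma(x)$ is a well-defined point of $\ast^n_X E$ over $x$, and $\sigma$ is a set-theoretic section of $j^n p$.

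The main obstacle is continuity of $\sigma$ at points where some weight $\phi_i$ passes through $0$: the map $s_i$ is undefined on the closed set $X\setminus U_i$, yet $\sigma$ must vary continuously as $\phi_i\to 0$. The quotient topology of the iterated double mapping cylinder is designed precisely to absorb this, since a summand of weight zero is identified with a point of a lower-rank join independently of its $E$-entry. A local verification makes this rigorous: given $x_0\in X$, let $J := \{j : x_0 \in \mathrm{supp}(\phi_j)\}$; subordination forces $x_0\in U_j$ for every $j\in J$, so each $s_j$ with $j\in J$ is defined and continuous on a neighbourhood of $x_0$, and one can choose an open neighbourhood $W$ of $x_0$ on which all $\phi_k$ with $k\notin J$ vanish identically. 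On $W$, $\sigma$ is a continuous combination of continuous data into the iterated double mapping cylinder, proving continuity at $x_0$.
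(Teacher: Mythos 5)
The paper does not prove this theorem: it is recalled from Schwarz \cite{schwarz} without an argument, so there is no in-text proof to compare against. Your proposal is essentially the standard Schwarz/James argument and is correct in its core. The ``if'' direction (reading off the open cover $U_i=\sigma^{-1}(V_i)$ and the local sections $\pi_i\circ\sigma|_{U_i}$ from a section $\sigma$ of $j^np$) is clean and in fact needs no hypothesis on $X$. The ``only if'' direction via a partition of unity subordinate to $\{U_i\}$ is the right construction, and your continuity check at the weight-vanishing locus is the standard and correct one.

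The one place where you gloss over a genuine step is the parenthetical reduction to a CW model. The theorem is stated for $X$ of the \emph{homotopy type} of a CW complex, and you invoke ``homotopy invariance of the fibration'' to pass to an actual CW complex (hence paracompact, hence admitting the partition of unity). Making this precise requires knowing that both $\secat$ and the existence of a section of $j^np$ are invariant when $p$ is pulled back along a homotopy equivalence of base spaces; the second of these in turn relies on the nontrivial facts that $j^np$ is itself a Hurewicz fibration and that the fiber join construction commutes with pullback. These are all established in \cite{schwarz}, but as written you simply assert them. The self-contained part of your argument proves the statement for paracompact $X$; the upgrade to ``homotopy type of a CW complex'' should be isolated as an explicit homotopy-invariance lemma rather than folded into a parenthesis.
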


\subsection{DGA modules}
\begin{sloppypar}
Recall that if $(A,d)$ is a differential algebra, a (left) $(A,d)$-module (or differential module over $(A,d)$) is a graded vector space $(M,d)$ with an action of $A$ that is compatible with the differential (${d(a\cdot m)}={da\cdot m}+ {(-1)^{|a|}a\cdot dm}$). If ${\varphi\colon (A,d)\to (B,d)}$ is a morphism of differential algebras, then $(B,d)$ is naturally endowed with an $(A,d)$-module structure, given by ${a\cdot b}={\varphi(a)\cdot b}$, and $\varphi$ is a morphism of $(A,d)$-modules. We say that a morphism $f\colon (M,d) \to (N,d)$ of $(A,d)$-modules admits a \emph{homotopy retraction} if there exists a commutative diagram of $(A,d)$-modules 
\end{sloppypar}
$$\xymatrix{
(M,d)\ar[r]^{id}_{} \ar[d]_{f}_{} \ar[dr]^{}_{}
& (M,d)
\\ 
(N,d)
& (P,d) \ar[l]_-{\sim} \ar[u]^{}_{} 
}$$
where the arrow $\we$ is a quasi-isomorphism.
It is well known that the category of $(A,d)$-modules is a proper closed model category (see for example \cite{mtc}). A morphism of $(A,d)$-modules admits a homotopy retraction if and only if it admits a retraction in the homotopy category of that model category.

\subsection{The invariants $\Msecat$ and $\Hsecat$} \label{Msecat}
Let $p: E \to X$ be a fibration. As usual, we denote by $A_{PL}$ Sullivan's (contravariant) functor of polynomial forms from the category of spaces to the category of commutative cochain algebras.  In \cite{mtc}, the \emph{module sectional category} of $p$, $\Msecat(p)$, has been defined as the least integer $n$ such that the morphism of $A_{PL}(X)$-modules $A_{PL}(j^np)\colon A_{PL}(X) \to A_{PL}(\ast^n_XE)$ admits a homotopy retraction.
If no such $n$ exists, one sets $\Msecat (p) = \infty$. If $p$ is a fibration over a space of the homotopy type of a CW complex, then, by Theorem \ref{joinsecat},  $\Msecat(p)\leq \secat(p)$.

Consider a simply connected pointed space $X$ of the homotopy type of a CW complex  of finite type and the path fibration $ev_1: PX \to X$, $\lambda \mapsto \lambda(1)$. It has been shown in \cite{mtc} that $\Msecat (ev_1)$ equals the classical invariant $\Mcat (X)$. As mentioned in the introduction, K. Hess \cite{Hess} established that $\Mcat(X)=\cat_0(X)$, the rational L.-S. category of $X$. An example showing that, in general, $\Msecat$ does not coincide with rational sectional category is given in \cite{mtc,Stanley}.

We define the \textit{cohomology sectional category} of $p$, $\Hsecat(p)$, by
$$\begin{array}{rcl}
\Hsecat(p)\leq n & :\Leftrightarrow & H(A_{PL}(j^ n(p))) \mbox{ is injective}\\
& \Leftrightarrow & H^*(j^ n(p)) \mbox{ is injective.}
\end{array}
$$ 
We obviously have 
$\Hsecat(p)\leq \Msecat(p)$. If $X$ is a simply connected well-pointed space of the homotopy type of a CW complex and  $p=ev_1\colon PX\to X$, then $\Hsecat(p)$ coincides with the rational Toomer invariant of $X$, $e_0(X)$. In \cite{mtc}, it has been established that $\Msecat(p) \geq \nil\ker p^*$. We note that the argument given in \cite{mtc} actually shows that  $\Hsecat(p) \geq \nil\ker p^*$.
We finally also observe that the existence of a commutative diagram of the form
$$\xymatrix{\ast^n_XPX \ar[rr]\ar[rd]_{j^nev_1} && \ast^n_X E \ar[ld]^{j^np}\\
&X
}$$
permits one to establish the following analogues of the second inequality of Theorem \ref{schwarzthm1} for a fibrations over simply connected CW complexes of finite type:
$$\Hsecat(p)\leq e_0(X), \quad \Msecat(p)\leq \Mcat(X).$$

\section{Fibrations over Poincaré duality spaces} \label{dem}

\subsection{Poincar\'e duality}\label{Poincareduality}

Recall that a finite dimensional commutative graded algebra $H$ with $H^0=\Q$ is a (rational) \emph{Poincar\'e duality algebra}  of \emph{formal dimension} $n$ if $H$ is concentrated in degrees $0\leq p\leq n$ and there exists an element $\Omega \in H^ n$ such that  $H^n={\Q\cdot \Omega}$ and the map of degree $-n$
$$\begin{array}{rcl}
\Phi\colon H &\to& \Hom(H,\Q) \\
a & \mapsto &\Phi(a)\colon b\mapsto \Omega^{\sharp}(a\cdot b)
\end{array},
\quad \Omega^{\sharp}(\Omega)=1,\,\, \Omega^{\sharp}(H^ {\not=n})=0
$$
is an isormophism. The element $\Omega$ and the isomorphism $\Phi$ are respectively referred to as the \emph{fundamental class} and the \emph{Poincar\'e duality isomorphism}.

If $(A,d)$ is a commutative cochain algebra such that $H=H(A,d)$ satisfies Poincar\'e duality, then a map analogous to $\Phi$ can be defined on $A$. Indeed, let $\omega \in A^n$ be a cocycle representing the fundamental class $\Omega \in H^ n$, and let $S\subset A^n$ be a subset such that
$$A^n=\Q\cdot \omega \oplus S \quad \mbox{and} \quad d(A^{n-1})\subset S.$$
We can then define $\omega^{\sharp}\colon A\to \Q$ by
$$\omega^{\sharp}(\omega)=1, \quad \omega^{\sharp}(S)=0, \quad \mbox{and} \quad \omega^{\sharp}(A^i)=0 \,\, \mbox{ for } i\neq n$$
as well as the following map of degree $-n$:
$$\begin{array}{rcl}
\phi\colon A &\to& \Hom(A,\Q) \\
a & \mapsto &\phi(a)\colon b\mapsto \omega^{\sharp}(a\cdot b)
\end{array}.
$$
Recall that $\Hom(A,\Q)$ is an $(A,d)$-module with respect to the action of $A$ given by ${a\cdot f(x)}={(-1)^{|a||f|}f(ax)}$ and the differential $\delta$ given by ${\delta f}={-(-1)^ {|f|}fd}$. It is a  well-known fact that the map $\phi$ is a quasi-isomorphism of $(A,d)$-modules.

\subsection{Proof of Theorem \ref{Poincare}} 

Let $p\colon E\to X$ a fibration such that  $H^*(X)$ is a Poincar\'e duality algebra. Recall that we want to prove that $\Msecat(p)=\Hsecat(p)$. We already know that $\Msecat(p)\geq\Hsecat(p)$. The other direction follows by applying the following theorem to the morphism of commutative cochain algebras $A_{PL}(j^np)\colon A_{PL}(X) \to A_{PL}(\ast^n_XE)$.

\begin{thm}
Let $f\colon  (A,d)\rightarrow (B,d)$ be a  morphism of commutative cochain algebras such that $H(A,d)$ is a Poincar\'e duality algebra. If $H(f)$ is injective, then $f$ admits a homotopy retraction as a morphism of $(A,d)$-modules.
\end{thm}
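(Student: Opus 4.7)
The proof rests on two inputs from Section~\ref{Poincareduality}: the Poincar\'e duality isomorphism produces a quasi-isomorphism of $(A,d)$-modules
$$\phi\colon (A,d)\we \Hom(A,\Q),$$
and there is a natural adjunction isomorphism of cochain complexes
$$\Hom_{(A,d)\text{-mod}}(M,\Hom(A,\Q))\cong \Hom(M,\Q)$$
for every $(A,d)$-module $M$, coming from the identification $M\otimes_A A\cong M$; this adjunction sends cochain morphisms to cocycles, and $(A,d)$-module homotopies to coboundaries.

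The plan is to reduce the existence of a homotopy retraction of $f$ to a lifting problem in cohomology. Since $\phi$ is a weak equivalence in the closed model category of $(A,d)$-modules, it is invertible in the homotopy category. Hence, to produce a homotopy retraction of $f$, it suffices to build a morphism of $(A,d)$-modules $g\colon B\to \Hom(A,\Q)$ together with an $(A,d)$-module homotopy $g\circ f\simeq \phi$: then $\phi^{-1}\circ g$, interpreted in the homotopy category, is a homotopy retraction of $f$.

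Now I translate everything via the adjunction. Morphisms $B\to \Hom(A,\Q)$ of $(A,d)$-modules correspond bijectively to elements $\lambda\in \Hom(B,\Q)$, with cochain morphisms corresponding to cocycles. Since $A$ is free of rank one as an $(A,d)$-module, a cochain morphism $A\to \Hom(A,\Q)$ is determined by its value on $1_A$, and $\phi$ corresponds to $\omega^\#$. Under these identifications, the composite $g\circ f$ corresponds to $f^*(\lambda)\in\Hom(A,\Q)$, where $f^*\colon \Hom(B,\Q)\to\Hom(A,\Q)$ is the $\Q$-linear dual of $f$. Consequently, the homotopy $g\circ f\simeq\phi$ amounts to the identity $[f^*(\lambda)]=[\omega^\#]$ in $H^{-n}(\Hom(A,\Q))$. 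Working over the field $\Q$, the functor $\Hom(-,\Q)$ is exact and commutes with cohomology, so $H(f^*)$ is the linear dual of $H(f)$; the assumed injectivity of $H(f)$ therefore makes $H(f^*)$ surjective, and $[\omega^\#]$ lifts to a class $[\lambda]\in H^{-n}(\Hom(B,\Q))$. Any cocycle representative $\lambda$ then yields, through the adjunction, the desired $g$.

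I expect the main technical obstacle to be the model-categorical bookkeeping needed to convert the argument above into the explicit zigzag demanded by the diagrammatic definition of homotopy retraction. Concretely, one should factor $\phi$ as a trivial cofibration followed by a trivial fibration of $(A,d)$-modules, lift $g$ through the trivial fibration, and use that $A$ is fibrant to retract the trivial cofibration; this, together with routine Koszul-sign bookkeeping in the adjunction, is the only ingredient beyond the cohomological argument.
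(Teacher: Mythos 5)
Your proposal is correct, and it takes a genuinely different route from the paper. Both proofs exploit the quasi-isomorphism $\phi\colon A\to\Hom(A,\Q)$, but they use it differently. The paper runs the argument on the nose: it first applies the surjective trick to replace $f$ by a surjection $q\colon R\fib B$ with kernel $K$, chooses the complement $S$ so that $\omega^\sharp$ vanishes on $K$, and then uses that $K$ is an \emph{ideal} to conclude $\phi(K)=0$, so the (suspended) duality map $\hat\phi$ strictly factors through $B$; the retraction is then assembled from a factorization of $q$ and a lifting. You instead observe that one only needs $\phi$ to factor through $f$ \emph{up to homotopy}, and you reduce that to a cohomological lifting problem via the adjunction $\Hom_{(A,d)\text{-mod}}(M,\Hom(A,\Q))\cong\Hom(M,\Q)$. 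Since $\Q$ is a field, $H(f)$ injective forces its dual $H(f)^*$ to be surjective, which produces the needed lift. The paper's argument is more explicit; yours is more conceptual and avoids both the surjectivity reduction and the appeal to the ideal structure of $\ker q$ — indeed, your argument only uses the $(A,d)$-module structure of $B$, not the algebra morphism structure of $f$, so it establishes a nominally more general module-theoretic statement.

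Two small points worth tightening. First, $\phi$ has degree $-n$, so to speak of it as a map in the model category you should (as the paper does) pass to $\hat\phi\colon A\to s^{-n}\Hom(A,\Q)$ and carry the suspension through the adjunction; this is pure bookkeeping, but your writeup elides it. Second, the final paragraph overestimates the remaining work: the paper already records that "admits a homotopy retraction" is equivalent to "admits a retraction in the homotopy category," so once you have $[g][f]=[\hat\phi]$ in $\mathrm{Ho}$ with $[\hat\phi]$ invertible, you are done — no explicit zigzag via a factorization of $\hat\phi$ is required. (Your sketched zigzag would in any case need a cofibrant replacement of $B$ before lifting $g$ through a trivial fibration, since $B$ need not be cofibrant.)
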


\begin{proof}
We first use the so-called ``surjective trick" \cite[p. 148]{RHT} to factor $f$ in the category of commutative cochain algebras in a quasi-isomorphism  $i\colon (A,d) \to (R,d)$ with a retraction $r$ and a surjective morphism $q\colon (R,d) \to (B,d)$. We then have $B = 
R/K$ where $K$ is the kernel of $q$. Since $H(f)$ is injective, so is
$H(q)$, and since $H(A,d)$ is a Poincar\'e duality algebra, so is $H(R,d)$. As in Section \ref{Poincareduality}, denote by $\omega\in R^n$ a cocycle representing the fundamental class of $H(R,d)$. Since $H(q)$ is injective, $\omega\notin d(R^{n-1})+K^n$. Thus we can write 
$$R^n=\Q\cdot \omega\oplus S$$ 
where $S$ satisfies $d(R^{n-1})+K^n\subset S$. Observe that, under these conditions, the map $\omega^{\sharp}$ defined in Section \ref{Poincareduality} satisfies $\omega^{\sharp}(K)=0$. Since $K$ is an ideal of $R$, the quasi-isomorphism of $(R,d)$-modules  $\phi\colon (R,d)\to (\Hom(R,\Q),\delta)$ (of degree $-n$) defined in Section  \ref{Poincareduality} satisfies $\phi(K)=0$. Therefore, composing with the suspension isomorphism, we get a quasi-isomorphism of $(R,d)$-modules (of degree 0) 
$$\hat{\phi}\colon  (R,d) \stackrel{\phi}{\longrightarrow}(\Hom(R,\Q),\delta)\stackrel{\cong}{\longrightarrow} s^{-n}(\Hom(R,\Q),\delta)$$ 
that factors as $q$ followed by a morphism of $(R,d)$-modules ${l\colon (B,d) = (R/K,d) }\to s^{-n}(\Hom(R,\Q),\delta)$. Recall that the $n$th suspension of an $(R,d)$-module $(M,d)$ is the $(R,d)$-module $s^{-n}(M,d) = (s^{-n}M,d)$ given by $(s^{-n}M)^i=M^{i-n}$, ${a\cdot s^{-n}x}={(-1)^{n|a|}s^{-n}(a\cdot x)}$ and ${d(s^{-n}x)}={(-1)^ns^{-n}(dx)}$. The suspension isomorphism given by $x\mapsto s^{-n}x$ is an isomorphism of $(R,d)$-modules of degree $n$. 

Now factor $q$ in the closed model category of $(R,d)$-modules in a cofibration $j\colon (R,d) \cof (P,d)$ and a weak equivalence $\psi\colon (P,d) \we (B,d)$ and form the following commutative diagram of $(R,d)$-modules: 
\[\xymatrix{
(R,d)\ar[rr]^{id}\ar@{ >->}[d]_{j}&&(R,d)\ar[d]^\sim_{\hat{\phi}}\\
(P,d)\ar[r]_{\psi}^{\sim} & (B,d)\ar[r]_(.35){l}&s^n( Hom(R,\Q),\delta)
}\]
By the well-known lifting lemma, we obtain a retraction $\rho$ of $j$. The commutative diagram of $(A,d)$-modules 
\[\xymatrix{
(A,d) \ar@/^2pc/[rrr]^{id} \ar[r]^{\sim}_{i} \ar[d]_{f} & (R,d)  \ar[r]^{id}  \ar[d]^{j} & (R,d) \ar[r]^{\sim}_{r} & (A,d)\\
(B,d)   & (P,d) \ar[l]^{\sim}_{\psi} \ar[ur]_{\rho} & &
}\]
shows that $f$ admits a homotopy retraction of $(A,d)$-modules.
\end{proof}

\section{Application to topological complexity} \label{MTC}

\subsection{Topological complexity and the lower bounds $\MTC$ and $\HTC$} 

Let $X$ be a path-connected space of the homotopy type of a CW complex of finite type. The \emph{topological complexity} of  $X$, $\TC(X)$, is the sectional category of the fibration ${ev_{0,1}\colon X^{[0,1]}}\to {X\times X}$, $\lambda \to (\lambda(0),\lambda(1))$ \cite{Farber1}. Note that this fibration is equivalent to the diagonal map ${\Delta\colon X}\to {X\times X}$. Therefore, the cohomological lower bound ${\nil\ker ev_{0,1}^*}={\nil\ker \Delta^*}$ coincides with ${\nil\ker\cup}$ where $\cup\colon {H^ *(X)\otimes H^ *(X)} \to H^* (X)$ is the cup product. Theorem \ref{schwarzthm1} gives:
$$\nil\ker \cup\leq \TC(X)\leq \cat(X\times X).$$
One also has  $\TC(X)\geq \cat(X)$ when $X$ is pointed \cite{Farber1}. Using the invariants $\Msecat$ and $\Hsecat$, we obtain two other lower bounds of $\TC$, for which we naturally use the following notations:
$$\HTC(X):=\Hsecat(ev_{0,1}), \quad  \MTC(X):=\Msecat(ev_{0,1}).$$ The invariant $\MTC$ has been introduced in \cite{mtc}. We have 
$$\nil \ker \cup \leq \HTC(X) \leq \MTC(X).$$ As a consequence of Theorem \ref{Poincare}, we obtain

\begin{cor} If $H^*(X)$ is a Poincar\' e duality algebra, then $\MTC(X)=\HTC(X)$.
\end{cor}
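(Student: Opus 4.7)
The plan is to deduce the corollary directly from Theorem \ref{Poincare} applied to the topological complexity fibration $ev_{0,1}\colon X^{[0,1]}\to X\times X$. By definition, $\HTC(X) = \Hsecat(ev_{0,1})$ and $\MTC(X) = \Msecat(ev_{0,1})$, so the only thing to verify is that the \emph{base} space of this fibration, namely $X \times X$, has rational cohomology satisfying Poincar\'e duality whenever $X$ does.

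The key step is therefore a check that the class of finite-dimensional rational Poincar\'e duality algebras is closed under tensor products. Since $X$ is of the homotopy type of a CW complex of finite type, the K\"unneth theorem supplies an isomorphism of graded algebras
$$H^*(X\times X;\Q) \cong H^*(X;\Q)\otimes H^*(X;\Q).$$
If $\Omega \in H^n(X;\Q)$ is a fundamental class for $X$ with Poincar\'e duality isomorphism $\Phi$, then $H^*(X;\Q)\otimes H^*(X;\Q)$ is concentrated in degrees $0,\dots,2n$, its top degree part is one-dimensional and spanned by $\Omega\otimes\Omega$, and the natural pairing
$$(H^*(X)\otimes H^*(X))^p \otimes (H^*(X)\otimes H^*(X))^{2n-p} \to \Q$$
induced by $(\Omega\otimes\Omega)^{\sharp}$ factors through the tensor product of the pairings associated with $\Phi$. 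The non-degeneracy of $\Phi$ on each factor then gives non-degeneracy of the pairing on $H^*(X\times X;\Q)$, so that $\Omega\otimes\Omega$ serves as a fundamental class and $H^*(X\times X;\Q)$ is a Poincar\'e duality algebra of formal dimension $2n$.

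Once this observation is in place, Theorem \ref{Poincare} applies to $p = ev_{0,1}$ and yields $\Msecat(ev_{0,1}) = \Hsecat(ev_{0,1})$, that is $\MTC(X) = \HTC(X)$. There is no genuine obstacle in this argument; the only item worth recording carefully is the stability of Poincar\'e duality under tensor products, which is routine but essential since Theorem \ref{Poincare} is stated only for fibrations whose base itself is a rational Poincar\'e duality space.
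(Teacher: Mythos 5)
Your proof is correct and follows exactly the same route as the paper: apply Theorem \ref{Poincare} to $ev_{0,1}$ after observing that $H^*(X\times X)\cong H^*(X)\otimes H^*(X)$ inherits Poincar\'e duality from $H^*(X)$. The paper simply states this closure under tensor products as a known fact, whereas you spell out the K\"unneth isomorphism and the non-degeneracy of the tensor-product pairing; the underlying argument is the same.
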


\begin{proof}
It suffices to observe that since $H^*(X)$ is a Poincar\' e duality algebra, this also holds for ${H^*(X\times X)} = {H^*(X)\otimes H^*(X)}$.
\end{proof}

If $H^*(X)$ is not a Poincar\' e duality algebra, the numbers $\MTC(X)$ and $\HTC(X)$ can be different. Indeed, Félix, Halperin, and Thomas \cite{FHT83} constructed a simply connected CW complex of finite type $X$ such that $\cat_0(X) = \infty$ and $e_0(X) =2$. It has been shown in \cite{mtc} that $\MTC(X)\geq  \cat_0(X)$, and so it follows that $\MTC(X) = \infty$. On the other hand, $\HTC(X) \leq e_0(X\times X) = 2e_0(X) = 4$. Let us also note that \cite{mtc} contains an example showing that $\MTC$ can be greater than $\nil\ker\cup$. The argument given actually shows that this also holds for $\HTC$.

\subsection{The invariants of Jessup,  Murillo, and Parent}

Let $X$ be a simply connected space of the homotopy type of a CW complex of finite type, and let $(\Lambda (V),d)$ be a Sullivan model of $X$. Inspired by the classical algebraic description of rational L.-S. category due to F\'elix and Halperin \cite{FH}, Jessup,  Murillo, and Parent \cite{JMP} consider the multiplication ${\mu\colon (\Lambda V \otimes \Lambda V,d)}\to {(\Lambda V,d)}$, which is a model of the fibration ${ev_{0,1}\colon X^ I}\to {X\times X}$, and define the invariants $\mathbf{tc}(X)$ and $\mathbf{mtc}(X)$ in terms of the projections 
$$p_n\colon (\Lambda V \otimes \Lambda V,d) \to \left(\frac{\Lambda V \otimes \Lambda V}{(\ker \mu)^{n+1}},\bar d\right)$$
as follows: 
\begin{itemize}
\item[-] $\mathbf{tc}(X)$ is the least integer such that $p_n$ admits a homotopy retraction as a morphism of commutative cochain algebras;
\item[-] $\mathbf{mtc}(X)$ is the least integer such that $p_n$ admits a homotopy retraction as a morphism of $(\Lambda V \otimes \Lambda V,d)$-modules.
\end{itemize}
They establish that
$$\TC_0(X)\leq \mathbf{tc}(X) \quad \mbox{and} \quad \MTC(X)\leq \mathbf{mtc}(X)\leq \mathbf{tc}(X).$$
Here, $\TC_0(X)$ is the rational topological complexity of $X$, i.e., the topological complexity of a rationalization of $X$. 

We can extend the above definitions by denoting by $\htc(X)$ the least integer $n$ such that $H(p_n)$ is injective. We then have $\HTC(X)\leq \htc(X)$. Indeed, as shown in \cite{mtc}, the $n$th join map of the fibration ${ev_{0,1}\colon X^I}\to {X\times X}$ can be modeled by a semifree extension of ${(\Lambda V\otimes \Lambda V,d)}$-modules of the following form:
$$(\Lambda V\otimes \Lambda V,d)\to (\Lambda V\otimes \Lambda V\otimes (\Q\oplus Y),d), \qquad d(Y)\subset (\ker\mu)^{n+1} \oplus \Lambda V\otimes \Lambda V\otimes Y.$$
We can therefore define a morphism $$\xi\colon  (\Lambda V\otimes \Lambda V\otimes (\Q\oplus Y),d)\to \left(\frac{\Lambda V \otimes \Lambda V}{(\ker \mu)^{n+1}},\bar d\right)$$
of $(\Lambda V\otimes \Lambda V,d)$-modules by setting $\xi(Y)=0$. This morphism permits us to see that $\MTC(X)\leq  \mathbf{mtc}(X)$ and that $\HTC(X)\leq  \mathbf{htc}(X)$.

Applying Theorem \ref{Poincare} to the morphism $p_n$, we get:
\begin{cor}
If $H^*(X)$ is a Poincar\' e duality algebra, then $\htc(X)=\mtc(X)$.
\end{cor}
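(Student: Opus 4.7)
The plan is to reduce the corollary to the key technical theorem extracted from the proof of Theorem~\ref{Poincare}, namely that any morphism $f\colon (A,d)\to (B,d)$ of commutative cochain algebras with $H(A,d)$ a Poincar\'e duality algebra and $H(f)$ injective admits a homotopy retraction as a morphism of $(A,d)$-modules. I would apply this result with $f=p_n$.

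First I would dispose of the easy inequality $\htc(X)\leq\mtc(X)$: if $p_n$ admits a homotopy retraction in the category of $(\Lambda V\otimes\Lambda V,d)$-modules, then passing to cohomology yields a linear retraction of $H(p_n)$, so $H(p_n)$ is injective.

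For the reverse inequality I would first observe that $p_n$ is a surjective morphism of commutative cochain algebras, since $(\ker\mu)^{n+1}$ is a differential ideal of $\Lambda V\otimes\Lambda V$. Next I would verify that the source has Poincar\'e duality cohomology: one has $H(\Lambda V\otimes\Lambda V,d)\cong H^*(X)\otimes H^*(X)$, and the tensor product of two Poincar\'e duality algebras with fundamental classes $\Omega_1,\Omega_2$ and formal dimensions $n_1,n_2$ is again a Poincar\'e duality algebra of formal dimension $n_1+n_2$, with fundamental class $\Omega_1\otimes\Omega_2$. Assuming now $\htc(X)\leq n$, i.e., that $H(p_n)$ is injective, the technical theorem from Section~\ref{dem} furnishes a homotopy retraction of $p_n$ as a morphism of $(\Lambda V\otimes\Lambda V,d)$-modules, whence $\mtc(X)\leq n$.

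There is no serious obstacle: once the main technical result of Section~\ref{dem} is in hand, the whole argument consists in checking that its hypotheses apply to $p_n$, and the only nontrivial point is the routine verification that the tensor product of Poincar\'e duality algebras is Poincar\'e duality.
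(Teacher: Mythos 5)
Your proof is correct and is essentially the paper's own argument: the paper's one-line proof is precisely to apply the unnumbered theorem inside the proof of Theorem~\ref{Poincare} (the one asserting that a morphism $f\colon(A,d)\to(B,d)$ of cochain algebras with $H(A,d)$ Poincar\'e duality and $H(f)$ injective admits a module homotopy retraction) to $p_n$, using the observation from the preceding corollary that $H^*(X)\otimes H^*(X)$ is again a Poincar\'e duality algebra. You have simply spelled out the hypotheses being checked.
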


A natural question is whether the numbers $\mathbf{tc}(X)$, $\mathbf{mtc}(X)$, and $\mathbf{htc}(X)$ coincide with $\TC_0(X)$, $\MTC(X)$, and $\HTC(X)$ respectively. Although this is true for some classes of spaces, such as, for instance, 
\begin{itemize}
\item[-] formal spaces (in this case, all invariants coincide with $\nil\ker\cup$)
\item[-] spaces with finite dimensional rational homotopy concentrated in odd degrees (in this case, all invariants coincide with $\cat_0(X)$),
\end{itemize}
we do not know if it is true in general.


\begin{thebibliography}{}

\bibitem{CLOT}
O. Cornea, G. Lupton, J. Oprea, and D. Tanr\'e: \textit{Lusternik-Schnirelmann category}, AMS Mathematical Surveys and Monographs 103 (2003).

\bibitem{Farber1} M. Farber: Topological Complexity of Motion Planning, \textit{Discrete Comput. Geom.} 29 (2003), 211-221.

\bibitem{FH}
Y. F\'elix and S. Halperin: Rational LS category and its applications, \textit{Trans. Amer. Math. Soc.} 273 (1982), 1-37.

\bibitem{FHT83}
Y. F\'elix, S. Halperin, and J.-C. Thomas: L.S. cat\'egorie et suite spectrale de Milnor-Moore (une nuit dans le train). \textit{Bull. S.M.F.} 111 (1983), 89-96.

\bibitem{FHL} Y. F\'elix, S. Halperin, and J.-M. Lemaire: The rational LS category of products and of Poincar\'e duality complexes, \textit{Topology} 37(4) (1998), 749-756.

\bibitem{JMP} B. Jessup, A. Murillo, and P.-E. Parent: Rational Topological Complexity, \textit{Algebraic \& Geometric Topology} 12 (2012), 1789-1801.

\bibitem{RHT}
Y. F\'elix, S. Halperin, and J.-C. Thomas: \textit{Rational homotopy theory},
GTM 205, Springer-Verlag (2001).

\bibitem{mtc} L. Fern\'andez Su\'arez, P. Ghienne, T. Kahl, and L.
Vandembroucq: Joins of DGA modules and sectional category.
\textit{Algebraic \& Geometric Topology} 6 (2006), 119-144.

\bibitem{Hess}
K. Hess: A proof of Ganea's conjecture for rational spaces. \textit{Topology} 30 (1991), 205-214.

\bibitem{Stanley}
D. Stanley: The sectional category of spherical fibrations, \textit{Proc. Am. Math. Soc.} 128(10) (2000), 3137-3143. 

\bibitem{schwarz}
A.S. Schwarz: The genus of a fiber space, \textit{Amer. Math. Soc. Transl.} 55 (1966), 49-140.

\end{thebibliography}
\end{document}